\newtheorem{Teo}{Theorem}[section]
\newtheorem{Prop}[Teo]{Proposition}
\newtheorem{Lema}[Teo]{Lemma}
\newtheorem{Cor}[Teo]{Corollary}
\theoremstyle{definition}
\newtheorem{Obs}[Teo]{Remark}
\newcommand{\N}{\mathbb{N}}
\DeclareMathOperator{\CA}{char}
\begin{document}
\title{On stable and fixed polynomials}
\author{J. Novacoski}
\author{M. Spivakovsky}
\thanks{During the realization of this project the first author was supported by a grant from Funda\c c\~ao de Amparo \`a Pesquisa do Estado de S\~ao Paulo (process number 2017/17835-9).}

\begin{abstract}
Let $\nu$ be a rank one valuation on $K[x]$ and $\Psi_n$ the set of key polynomials for $\nu$ of degree $n\in\N$. We discuss the concepts of being $\Psi_n$-stable and $(\Psi_n,Q)$-fixed. We discuss when these two concepts coincide. We use this discussion to present a simple proof of Proposition 8.2 of \cite{MaSpiv} and Theorem 1.2 of \cite{michael}. 
\end{abstract}

\keywords{Key polynomials, stable polynomials, truncations of valuations, fixed polynomials}
\subjclass[2010]{Primary 13A18}

\maketitle
\section{Introduction}

Let $\nu$ be a rank one valuation on $K[x]$. For $n\in\N$ set
\[
\Psi_n=\{Q\in K[x]\mid Q\mbox{ is a key polynomial for }\nu\mbox{ and }\deg(Q)=n\}.
\] 
Suppose that $\Psi_n$ is non-empty and bounded (i.e., there exists $a\in K[x]$ such that $\nu(a)>\nu(Q)$ for every $Q\in \Psi_n$) and that $\nu(\Psi_n)$ does not have a maximum. Set $K[x]_n=\{a\in K[x]\mid \deg(a)<n\}$. For each $f\in K[x]$ and $Q\in \Psi_n$ the \textbf{$Q$-expansion} of $f$ is the expression
\[
f=a_0+a_1Q+\ldots+a_rQ^r=l(Q)
\]
where $l(X)\in K[x]_n[X]$. We denote the value $r$ (which does not depend on the choice of $Q\in\Psi_n$) in the previous expression by $\deg_X(f)$. The truncation of $\nu$ on $Q$ is given by
\[
\nu_Q(f):=\min_{0\leq i\leq r}\{\nu(a_iQ^i)\}.
\]
Let
\[
S_n=\{f\in K[x]\mid \nu_Q(f)<\nu(f)\mbox{ for every }Q\in\Psi_n\}.
\]
A polynomial $f$ is said to be \textbf{$\Psi_n$-stable} if it does not belong to $S_n$. A monic polynomial $F\in K[x]$ is called a \textbf{limit key polynomial} for $\Psi_n$ if it belongs to $S_n$ and has the smallest degree among polynomials in $S_n$.

In \cite{Kap}, Kaplansky introduces the concept of \textit{pseudo-convergent sequences}. These objects are strongly related to the set $\Psi_1$. For a given such sequence $\underline a$ we can define what it means for a polynomial  $f\in K[x]$ to be fixed by $\underline a$. Here, we generalize this concept for any of the sets $\Psi_n$. We say that $f=l(Q)$, $l(X)\in K[x]_n[X]$, is \textbf{$(\Psi_n,Q)$-fixed} if there exists $Q'\in \Psi_n$, $\nu(Q)<\nu(Q')$ such that $\nu(f)=\nu(l(Q'-Q))$.

Our first main result (Proposition \ref{Corsobrepowerofp}) is that we can choose a suitable $Q\in \Psi_n$ such that for any $f\in K[x]$ with $\deg_X(f)\leq \deg_X(F)$, we obtain that $f$ is $\Psi_n$-stable if and only if it is $(\Psi_n,Q)$-fixed.

We will fix a suitable $Q\in K[x]$ (see \eqref{eqaescolhaimpor} and \eqref{eqaescolhaimpor2}). Then we fix a limit ordinal $\lambda$ and any cofinal well-ordered (with respect to $\nu$) subset
\[
\{Q_\rho\}_{\rho<\lambda}\subseteq\{Q'\in \Psi_n\mid \epsilon(Q)<\epsilon(Q')\}.
\]
This means that if $\rho<\sigma<\lambda$, then $\nu(Q_\rho)<\nu(Q_\sigma)$ and that for every $Q'\in \Psi_n$, there exists $\rho<\lambda$ such that $\nu(Q')<\nu(Q_\rho)$. For each $\rho<\lambda$ set $h_\rho:=Q-Q_\rho\in K[x]_n$ and $\gamma_\rho=\nu(Q_\rho)$. It follows from the definition that $\{h_\rho\}_{\rho<\lambda}$ is a \textit{pseudo-convergent sequence} for $\nu$. For simplicity, we will denote $\nu_{Q_\rho}$ by $\nu_\rho$.
 
Let $p={\rm char}(K\nu)$ and $I$ be the set of all non-negative powers of $p$. As an application of Proposition \ref{Corsobrepowerofp} we can prove the following.
\begin{Teo}\label{Tehoremquasneunpas}
Let $F$ be a limit key polynomial for $\Psi_n$ and write $F=L(Q)$ for some $L(X)\in K[x]_n[X]$. Then we have the following.
\begin{description}
\item[(i)] There exists $\sigma<\lambda$ such that for every $\theta>\sigma$ the polynomial
\[
F_p=L(h_\theta)+\sum_{i\in I}\partial_iL(h_\theta)Q_\theta^i
\]
is a limit key polynomial for $\Psi_n$. Here $\partial_iL$ denotes the Hasse derivative of $L(X)$ (as a polynomial in $K(x)[X]$) of order $i$.

\item[(ii)] For each $i\in I\cup\{0\}$ there exists $a_i\in K[x]_n$  such that
\[
\overline F_p=\sum_{i\in I\cup\{0\}}a_iQ_\theta^i
\]
is a limit key polynomial for $\Psi_n$.
\end{description}
\end{Teo}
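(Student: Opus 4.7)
The plan is to apply Proposition \ref{Corsobrepowerofp}, which for the specific choice of $Q$ converts $\Psi_n$-stability into $(\Psi_n,Q)$-fixedness for polynomials of degree at most $\deg_X F$. Since $F$ is a limit key polynomial, it fails to be $(\Psi_n,Q)$-fixed, and the goal is to transfer this failure to $F_p$ and $\overline F_p$ while preserving monicity, degree, and minimality. The computational workhorse is the Hasse--Schmidt (Taylor) identity: writing $Q = Q_\theta + h_\theta$,
\[
F = L(Q) = \sum_{j=0}^{r} \partial_j L(h_\theta)\, Q_\theta^j, \qquad r = \deg_X L,
\]
so that $F - F_p = \sum_{j\geq 1,\; j \notin I} \partial_j L(h_\theta)\, Q_\theta^j$.

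For part (i), I would first record that $r$ is itself a power of $p$, a standard structural feature of limit key polynomials in residue characteristic $p$. Consequently $r \in I$ and the top term $\partial_r L(h_\theta)\, Q_\theta^r$ survives in $F_p$, securing both monicity and $\deg F_p = \deg F$. The core step is to produce $\sigma < \lambda$ such that for every $\theta > \sigma$,
\[
\nu(F - F_p) > \nu(F).
\]
With this in hand, $\nu(F_p) = \nu(F)$ and the difference $F-F_p$, viewed through its $Q$-expansion, has coefficients of strictly larger $\nu$-value than those of $F$. Proposition \ref{Corsobrepowerofp} applied to $F$ then transfers the failure of $(\Psi_n,Q)$-fixedness to $F_p$, giving $F_p \in S_n$; minimality of $\deg F$ inside $S_n$ forces $F_p$ to be itself a limit key polynomial.

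For part (ii), the complication is that the coefficients $\partial_i L(h_\theta)$ need not lie in $K[x]_n$. First I would convert $F_p$ into its honest $Q_\theta$-expansion $F_p = \sum_m a_m Q_\theta^m$ with $a_m \in K[x]_n$, by further expanding each $\partial_i L(h_\theta)$ in powers of $Q_\theta$. Setting $\overline F_p = \sum_{m \in I \cup \{0\}} a_m Q_\theta^m$, I would then rerun the argument of part (i) at the next level: establish the analogous inequality $\nu(F_p - \overline F_p) > \nu(F_p)$ for $\theta$ large and conclude via Proposition \ref{Corsobrepowerofp} that $\overline F_p$ inherits the limit-key property from $F_p$.

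The hardest step in the whole argument is the inequality $\nu(F - F_p) > \nu(F)$ and its analogue for $\overline F_p$. Its plausibility rests on the fact that in residue characteristic $p$ the graded initial form of a limit key polynomial in the filtration defined by $\nu_\theta$ is necessarily an additive polynomial in the initial form of $Q_\theta$, and hence is supported only on $p$-power exponents; so the non-$p$-power Hasse terms must sit strictly above $\nu(F)$. Turning this heuristic into a precise lower bound on $\nu(\partial_j L(h_\theta))$ at non-$p$-power $j$, and identifying the threshold $\sigma$ in terms of $F$, $L$, and the cofinal sequence $\{Q_\rho\}$, is where the real work lies.
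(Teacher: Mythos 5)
Your overall strategy (Taylor-expand $F=L(Q)$ at $Q_\theta$, discard the non-$p$-power terms, and use Proposition \ref{Corsobrepowerofp} to relate instability to the values $\nu(l(h_\rho))$) matches the paper's, but your central inequality is pitched at the wrong level, and this is a genuine gap. You propose to show $\nu(F-F_p)>\nu(F)$ and deduce $F_p\in S_n$ from $\nu(F_p)=\nu(F)$. For a limit key polynomial one only knows $\nu(F)\geq\overline B=\lim_\rho\nu_\rho(F)$, and this can be strict; since $\nu(F-F_p)=\nu\bigl(\sum_{j\in J}\partial_jL(h_\theta)Q_\theta^j\bigr)$ is controlled only by the quantities $\beta_j+j\gamma_\theta<\beta_j+jB$, which Lemma \ref{Lemmasobrepowerofp} compares to $\overline B$ and not to $\nu(F)$, there is no reason for it to exceed $\nu(F)$. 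What the paper proves instead is the inequality for the evaluations: $\nu\left(L(h_\rho)-L_p(h_\rho)\right)>\beta_b+b\gamma_\rho=\nu(L(h_\rho))=\nu_\rho(F)$ for every $\rho>\theta$ (via \eqref{eqquennaoentsbanada} and Remark \ref{obsequsajudabasta}), whence $\nu(L_p(h_\rho))=\beta_b+b\gamma_\rho$ is strictly increasing in $\rho$; by Proposition \ref{Corsobrepowerofp} this equals $\nu_\rho(F_p)$ for large $\rho$, which therefore never stabilizes, so $F_p\in S_n$ --- no comparison with $\nu(F_p)$ is needed. Moreover, the quantitative input you defer as a heuristic about additive initial forms, namely that $\beta_i+i\gamma_\sigma<\beta_j+j\gamma_\sigma$ eventually when $i=p^t$ and $j=p^tr$ with $p\nmid r$, is exactly Lemma \ref{Lemmasobrepowerofp}; it is proved by applying Taylor's formula to $\partial_iL$ and using that $\nu\binom{j}{i}=0$ because $p\nmid\binom{j}{i}$, and it is the heart of the argument rather than a detail to be supplied later.

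Two further points. Your appeal to the ``standard structural feature'' that $r=\deg_X L$ is a power of $p$ is circular in this context: that fact is essentially the statement being reproved (Theorem 1.2 of \cite{michael}), and the paper instead derives $r\in I$ from the argument itself --- if $r\notin I$ then $\deg_X(F_p)<d$ while $F_p\in S_n$, contradicting the minimality of $\deg(F)$ in $S_n$. For \textbf{(ii)}, the paper does not rerun part (i): it replaces each coefficient $\partial_iL(h_\theta)$, $i\in I\cup\{0\}$, by the constant term $a_{\theta 0}(\partial_iL(h_\theta))$ of its $Q_\theta$-expansion and shows via Lemma \ref{Lemaparacabcomtuo} that $\nu_\rho(F_p-\overline F_p)>\overline B>\nu_\rho(F)$ for all $\rho$, again a bound against $\overline B$ rather than against $\nu(F_p)$; your proposed inequality $\nu(F_p-\overline F_p)>\nu(F_p)$ suffers from the same defect as in part (i).
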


Kaplansky proved the above result in the case $n=1$. In that case, \textbf{(ii)} follows trivially from \textbf{(i)}. Our proof of Theorem \ref{Tehoremquasneunpas} follows Kaplansky's proof.

If $n>1$, then \textbf{(ii)} was proven in \cite{MaSpiv} (Proposition 8.2). An alternative proof of it was presented in \cite{michael} (Theorem 1.2). The advantage of our proof is that it is much simpler and presents as algorithm on how to construct the limit key polynomial of this form (from a given limit key polynomial). Also, our proof does not require that are in the equicharacteristic case.

\section{Preliminaries}
Throughout this paper $\nu$ will denote a rank one valuation on $K[x]$. For $f\in K[x]$ we denote
\begin{equation}\label{equancoepsilo}
\epsilon(f)=\max_{1\leq b\leq \deg(f)}\left\{\frac{\nu(f)-\nu(\partial_b f)}{b}\right\},
\end{equation}
where $\partial_bf$ denotes the Hasse derivative of $f$ of order $b$. We denote
\[
I(f)=\left\{b, 1\leq b\leq \deg(f)\mid \epsilon(f)=\frac{\nu(f)-\nu(\partial_b f)}{b}\right\}.
\]
A monic polynomial $Q\in K[x]$ is said to be a \textbf{key polynomial for $\nu$} if for every $f\in K[x]$, if $\epsilon(f)\geq \epsilon(Q)$, then $\deg(f)\geq\deg(Q)$. For a polynomial $f\in K[x]$ let
\[
f=f_0+f_1Q+\ldots+f_rQ^r
\]
be the $Q$-expansion of $f$. We set
\[
S_Q(f)=\{i,0\leq i\leq r\mid \nu_Q(f)=\nu(f_iQ^i)\}\mbox{ and }\delta_Q(f)=\max S_Q(f).
\]

Throughout this paper we will fix a limit key polynomial $F$ for $\Psi_n$ and denote $d:=\deg_X(F)$. Set
\[
B=\lim_{Q\in \Psi_n}\nu(Q)\mbox{ and }\overline B=\lim_{Q\in \Psi_n}\nu_Q(F).
\]
Take $Q_0\in \Psi_n$ and choose $Q\in \Psi_n$ such that
\begin{equation}\label{eqaescolhaimpor}
\epsilon(Q)-\epsilon(Q_0)>d(B-\nu(Q))
\end{equation}
and
\begin{equation}\label{eqaescolhaimpor2}
\epsilon(Q)-\epsilon(Q_0)>\overline B-\nu_Q(F).
\end{equation}
Write $F=L(Q)$ for $L(X)\in K[x]_n[X]$.

The next result is well-known. We will reprove it here because we need this slightly stronger statement.
\begin{Lema}\label{keypolmelhorado}
Let $Q$ be a key polynomial and take $f$ such that $f=qQ+r$ with $\gamma=\max\{\epsilon(f),\epsilon(r)\}<\epsilon(Q)$. Then we have
\[
\nu_Q(qQ)-(\epsilon(Q)-\gamma)\geq\nu(f)=\nu(r).
\]
\end{Lema}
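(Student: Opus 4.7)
The plan is to extract the substantive quantitative inequality $\nu_Q(qQ)\geq\nu(r)+(\epsilon(Q)-\gamma)$ first, and then to read off $\nu(f)=\nu(r)$ as a consequence. The consequence is immediate: since $\gamma<\epsilon(Q)$, the inequality forces $\nu_Q(qQ)>\nu(r)$; combined with the universal estimate $\nu(qQ)\geq\nu_Q(qQ)$ we get $\nu(qQ)>\nu(r)$, and the ultrametric property applied to $f=qQ+r$ yields $\nu(f)=\nu(r)$.

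To prove the inequality, I would expand $q=\sum_j b_jQ^j$ in its $Q$-adic form, so that $\deg b_j<n$ and hence $\epsilon(b_j)<\epsilon(Q)$ by the contrapositive of the key-polynomial property, and $\nu_Q(qQ)=\min_j\nu(b_jQ^{j+1})$. The strategy is to evaluate $\partial_b(qQ)$ in two different ways for a carefully chosen order $b\geq 1$. On one hand, from $qQ=f-r$ together with $\epsilon(f),\epsilon(r)\leq\gamma$ one gets
\[
\nu(\partial_b(qQ))\geq \min\{\nu(f),\nu(r)\}-b\gamma.
\]
On the other hand, applying the Hasse--Leibniz rule termwise to $\sum_j b_jQ^{j+1}$ and using both $\epsilon(b_j)<\epsilon(Q)$ and the standard estimate $\epsilon(Q^k)\leq\epsilon(Q)$ (which follows by distributing $b$ across the $k$ copies of $Q$ in $\partial_b Q^k$) yields
\[
\nu(\partial_b(qQ))\geq \nu_Q(qQ)-b\epsilon(Q).
\]

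The heart of the argument is to choose $b$ so that the second estimate is in fact realised as an equality at the index $k^{\ast}$ achieving $\nu_Q(qQ)=\nu(b_{k^{\ast}}Q^{k^{\ast}+1})$. Once the equality holds, comparing the two estimates gives $\nu_Q(qQ)\geq \min\{\nu(f),\nu(r)\}+b(\epsilon(Q)-\gamma)$, which together with the separate observation $\nu(r)\leq\nu(f)$ (read off from the same ultrametric considerations) delivers the desired inequality. The main obstacle is exactly this tightness at $k^{\ast}$: one needs $\partial_b(b_{k^{\ast}}Q^{k^{\ast}+1})$ to have valuation exactly $\nu(b_{k^{\ast}}Q^{k^{\ast}+1})-b\epsilon(Q)$. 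In characteristic zero one may simply take $b\in I(Q)$; in positive characteristic certain multinomial coefficients appearing in the Hasse--Leibniz expansion of $\partial_bQ^{k^{\ast}+1}$ can vanish in the residue field, so $b$ must be chosen as a suitable multiple of the smallest element of $I(Q)$ in order to isolate a non-vanishing diagonal contribution such as $(\partial_{b_0}Q)^{k^{\ast}+1}$.
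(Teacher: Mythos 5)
Your overall skeleton coincides with the paper's: bound $\nu(\partial_b(qQ))$ from below via $qQ=f-r$ and $\epsilon(f),\epsilon(r)\leq\gamma$, bound it from above by $\nu_Q(qQ)-b\epsilon(Q)$ for a well-chosen $b\geq 1$, compare the two, and then read off $\nu(f)=\nu(r)$ from the ultrametric inequality. Your deduction of $\nu(f)=\nu(r)$ (and of $\min\{\nu(f),\nu(r)\}=\nu(r)$) from the strict inequality $\nu(qQ)>\min\{\nu(f),\nu(r)\}$ is correct and is exactly what the paper does.

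The gap is in the upper bound, which you correctly identify as ``the heart of the argument'' but do not close. The paper sidesteps the whole issue: it takes $b\in I(qQ)$, so that $\nu(\partial_b(qQ))=\nu(qQ)-b\,\epsilon(qQ)$ holds \emph{by definition} of $I(qQ)$, and then invokes the known identity $\epsilon(qQ)=\max\{\epsilon(q),\epsilon(Q)\}\geq\epsilon(Q)$ (Corollary 4.4 of \cite{Matheus}) to obtain $\nu(\partial_b(qQ))\leq\nu(qQ)-b\epsilon(Q)$; the same chain is then rerun with $\nu_Q$ in place of $\nu$ to pass from $\nu(qQ)$ to $\nu_Q(qQ)$. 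Your route --- choosing $b$ as a multiple of $b_0=\min I(Q)$ so as to isolate a diagonal term $(\partial_{b_0}Q)^{k^*+1}$ in the Hasse--Leibniz expansion --- is in effect an attempt to reprove that multiplicativity statement inline, and as sketched it does not go through: for $b=(k^*+1)b_0$ there are in general several decompositions $c_1+\cdots+c_{k^*+1}=b$ with every $c_i\in I(Q)\cup\{0\}$, each contributing a term of the same minimal value $\nu(Q^{k^*+1})-b\epsilon(Q)$; moreover several indices $k$ may simultaneously realize $\nu_Q(qQ)=\nu(b_kQ^{k+1})$, and their leading contributions could also interfere. Nothing in your sketch excludes cancellation among these equal-valued terms; excluding it is precisely the content (and the difficulty) of Corollary 4.4 of \cite{Matheus}, which is proved by a graded-algebra argument. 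So either cite that result --- in which case taking $b\in I(qQ)$, as the paper does, uses it far more economically than your term-by-term analysis --- or supply the cancellation argument in full.
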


\begin{proof}
Take $b\in I(qQ)$. Since $\epsilon(qQ)=\max\{\epsilon(Q),\epsilon(q)\}\geq \epsilon(Q)$ (Corollary 4.4 of \cite{Matheus}) we have
\[
\nu(qQ)-b\epsilon(Q)\geq \nu(\partial_b(qQ))\geq \min\{\nu(\partial_b(f)),\nu(\partial_b(r))\}\geq \min\{\nu(f),\nu(r)\}-b\gamma.
\]
Consequently, $\nu(f)=\nu(r)$ and
\[
\nu(qQ)-\nu(f)\geq b(\epsilon(Q)-\gamma)\geq \epsilon(Q)-\gamma.
\]
Applying the above discussion to $\nu_Q$ instead of $\nu$ we obtain the result.
\end{proof}
Take $f\in K[x]$ and $Q'\in \Psi_n$ with $\epsilon(Q)\leq \epsilon(Q')$ with $\epsilon(f)<\epsilon(Q_0)$ and write
\[
f=qQ'+r\mbox{ with }\deg(r)<\deg(Q')=\deg(Q_0).
\]
By \eqref{eqaescolhaimpor}, \eqref{eqaescolhaimpor2}, Lemma \ref{keypolmelhorado} and the fact that $\epsilon(r)<\epsilon(Q_0)$ we have
\begin{equation}\label{eqabespivmeseugost}
\nu_{Q'}(qQ')>\nu(f)+d(B-\nu(Q))
\end{equation}
and
\begin{equation}\label{eqabespivmeseugost2}
\nu_{Q'}(qQ')>\nu(f)+\overline B-\nu_Q(F).
\end{equation}
The next result is a well-known result about key polynomials.
\begin{Lema}\label{sugestnart}
Take $Q,Q'\in\Psi_n$ be such that $\nu(Q)<\nu(Q')$. For $f\in K[x]$ let $\displaystyle f=\sum_{i=0}^r f_iQ'^i$ be the $Q'$-expansion of $f$. Then
\[
\nu_Q\left(f\right)=\min_{0\leq i\leq r}\{\nu_Q(f_iQ'^i)\}.
\]
\end{Lema}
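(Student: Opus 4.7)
The plan is to prove the two inequalities separately. For the easier direction $\nu_Q(f) \geq \min_i\nu_Q(f_iQ'^i)$, I would first recall that since $Q \in \Psi_n$, the truncation $\nu_Q$ is a valuation on $K[x]$; the triangle inequality then applies to the sum $f = \sum_i f_iQ'^i$.

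Next, I would make the right-hand side explicit. Because $Q$ and $Q'$ are both monic of degree $n$, their difference $Q'-Q$ lies in $K[x]_n$, and the hypothesis $\nu(Q) < \nu(Q')$ together with the ultrametric inequality forces $\nu(Q'-Q) = \nu(Q)$. Reading off the $Q$-expansion $Q' = (Q'-Q) + Q$ yields $\nu_Q(Q') = \nu(Q)$. Combining this with multiplicativity of $\nu_Q$ and the identity $\nu_Q(f_i) = \nu(f_i)$ for $\deg(f_i) < n$ gives $\nu_Q(f_iQ'^i) = \nu(f_i) + i\,\nu(Q)$.

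For the reverse inequality I would proceed by induction on the $Q'$-degree $r$ of $f$. The case $r=0$ is immediate from $f = f_0 \in K[x]_n$. For the inductive step, perform Euclidean division by $Q'$ to write $f = qQ' + f_0$ with $\deg(f_0) < n$ and the $Q'$-degree of $q$ equal to $r-1$; the inductive hypothesis applied to $q$, together with multiplicativity of $\nu_Q$, identifies $\nu_Q(qQ')$ with $\min_{i\geq 1}\nu_Q(f_iQ'^i)$. If $\nu_Q(f_0) \neq \nu_Q(qQ')$, the standard non-Archimedean argument concludes the proof. The substantive case is when the two values coincide, where one must rule out cancellation.

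The main obstacle is this non-cancellation. I would address it by expanding $Q'^i = \sum_j \binom{i}{j}(Q'-Q)^{i-j} Q^j$ and passing to the $Q$-expansion of $f$, tracking how the contributions of $f_0$ and $qQ'$ at each power of $Q$ interact. The essential input is that $\deg(f_0) < n = \deg(Q)$ isolates $f_0$ at the zeroth level of the $Q$-filtration, while $\nu(Q'-Q) = \nu(Q)$ controls the propagation of valuations through the binomial expansion; alternatively, one could invoke an analog of Lemma \ref{keypolmelhorado} applied with $Q'$ in place of $Q$ to separate the $\nu_Q$-values of $f_0$ and $qQ'$ strictly, thus bypassing the cancellation case entirely.
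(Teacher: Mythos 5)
Your reduction is sound up to the last step, but the last step is the whole lemma, and you have not proved it. The easy inequality, the computation $\nu_Q(Q')=\nu(Q)$ via $\nu(Q'-Q)=\nu(Q)$, and the induction that identifies $\nu_Q(qQ')$ with $\min_{i\geq 1}\nu_Q(f_iQ'^i)$ are all fine. What remains is exactly the cancellation case $\nu_Q(f_0)=\nu_Q(qQ')$, where you must show $\nu_Q(f_0+qQ')$ equals this common value rather than exceeding it. That statement is equivalent to saying that $Q'$ is $\nu_Q$-minimal (no nonzero polynomial of degree $<n$ is $\nu_Q$-equivalent to a multiple of $Q'$), which is precisely the nontrivial content of the lemma; the paper does not reprove it but imports it, arguing that $Q'$ is monic of smallest degree among polynomials with $\nu_Q(f)<\nu(f)$, hence a Mac Lane--Vaqui\'e key polynomial for $\nu_Q$ by Theorem 31 of \cite{mabjulispiv}, in particular $\nu_Q$-minimal, after which Proposition 2.3 of \cite{nart} gives the expansion formula.

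Neither of your two sketched strategies for the cancellation case holds up. The claim that $\deg(f_0)<n$ ``isolates $f_0$ at the zeroth level of the $Q$-filtration'' is false as a mechanism: writing $q=\sum_k q_kQ^k$, the product $qQ'=qQ+q(Q'-Q)$ contributes $q_0(Q'-Q)$ (a polynomial of degree up to $2n-2$, whose $Q$-expansion has a nonzero degree-zero part) to the same zeroth level as $f_0$, so the interaction you need to rule out happens exactly there and the degree count alone does not exclude it. The alternative of invoking an analog of Lemma \ref{keypolmelhorado} with $Q'$ in place of $Q$ fails for general $f$: that lemma requires $\max\{\epsilon(f),\epsilon(r)\}<\epsilon(Q')$, and the present lemma is stated for arbitrary $f\in K[x]$, including polynomials (such as limit key polynomials) with $\epsilon(f)\geq\epsilon(Q')$. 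To close the gap you must either prove $\nu_Q$-minimality of $Q'$ directly or cite it, as the paper does.
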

\begin{proof}
Since $Q'$ is monic and has the smallest degree among all polynomials $f$ such that $\nu_Q(f)<\nu(f)$, it is a (Mac Lane-Vaqui\'e) key polynomial for $\nu_Q$ (Theorem 31 of \cite{mabjulispiv}). In particular, $Q'$ is $\nu_{Q}$- minimal and the result follows from Proposition 2.3 of \cite{nart}. 
\end{proof}
\begin{Lema}\label{simplemausimporta}
Let $Q'\in\Psi_n$ such that $\epsilon(Q)<\epsilon(Q')$. For any $f\in K[x]$ let
\[
f=a_0+a_1Q+\ldots+a_rQ^r\mbox{ and }f=b_0+b_1Q'+\ldots+b_rQ'^r
\]
be the $Q$ and $Q'$-expansions of $f$, respectively. For $l= \delta_Q(f)$ we have
\[
\nu(a_l-b_l)>\nu(a_l).
\]
In particular, $\nu(a_l)=\nu(b_l)$.
\end{Lema}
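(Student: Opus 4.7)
The plan is to work in the graded algebra $\mathrm{gr}_{\nu_Q}$ and match leading coefficients of the initial form of $f$ viewed as a polynomial in the initial form of $Q$.

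First, since $\nu(Q)<\nu(Q')$, the polynomial $g:=Q-Q'$ lies in $K[x]_n$ with $\nu(g)=\nu(Q)$, so $Q$, $Q'$ and $g$ all have $\nu_Q$-value equal to $\nu(Q)$. Setting $T:=\mathrm{in}_{\nu_Q}(Q)$, $T':=\mathrm{in}_{\nu_Q}(Q')$ and $U:=\mathrm{in}_{\nu_Q}(g)$, the equality $Q=Q'+g$ yields $T=T'+U$ at grade $\nu(Q)$, where $U$ lies in the subring $\mathcal{A}_0:=\mathrm{in}_{\nu_Q}(K[x]_n)\subseteq\mathrm{gr}_{\nu_Q}$.

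The central observation is that the very definition of $\nu_Q$ as the minimum over $Q$-expansions makes $Q$ a $\nu_Q$-minimal polynomial; equivalently, the natural map $\mathcal{A}_0[T]\hookrightarrow\mathrm{gr}_{\nu_Q}$ is injective, so two polynomials in $T$ over $\mathcal{A}_0$ that agree in $\mathrm{gr}_{\nu_Q}$ must coincide coefficient by coefficient.

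With this in place I would compute $\mathrm{in}_{\nu_Q}(f)$ from each of the two expansions. The $Q$-expansion gives
\[
\mathrm{in}_{\nu_Q}(f)=\sum_{i\in S_Q(f)}\mathrm{in}_{\nu_Q}(a_i)\,T^i,
\]
a polynomial in $T$ of degree exactly $l=\delta_Q(f)$ with leading coefficient $\mathrm{in}_{\nu_Q}(a_l)$. Applying Lemma \ref{sugestnart} to the $Q'$-expansion,
\[
\mathrm{in}_{\nu_Q}(f)=\sum_{j\in S'}\mathrm{in}_{\nu_Q}(b_j)\,(T-U)^j,\qquad S'=\{j\mid \nu(b_j)+j\nu(Q)=\nu_Q(f)\},
\]
and expanding the binomials shows this is a polynomial in $T$ of degree $l^*:=\max S'$ with leading coefficient $\mathrm{in}_{\nu_Q}(b_{l^*})$. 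Matching top terms forces $l^*=l$ and $\mathrm{in}_{\nu_Q}(a_l)=\mathrm{in}_{\nu_Q}(b_l)$, which is exactly $\nu_Q(a_l-b_l)>\nu_Q(a_l)=\nu_Q(b_l)$. Since $a_l,b_l\in K[x]_n$ the truncation $\nu_Q$ agrees with $\nu$ on them, yielding $\nu(a_l-b_l)>\nu(a_l)$ and in particular $\nu(a_l)=\nu(b_l)$.

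The main delicate point is justifying the injectivity of $\mathcal{A}_0[T]\hookrightarrow\mathrm{gr}_{\nu_Q}$; however, this is immediate by unwinding the definition of $\nu_Q$ applied to a linear combination $\sum c_iQ^i$ with $c_i\in K[x]_n$, and it is precisely the feature that makes the coefficient comparison legitimate.
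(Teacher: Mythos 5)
Your overall strategy (substitute $Q=Q'+g$ and show the cross terms do not disturb the top coefficient) is the same as the paper's, but your execution has a genuine gap at the step ``expanding the binomials shows this is a polynomial in $T$ of degree $l^{*}$ with leading coefficient $\mathrm{in}_{\nu_Q}(b_{l^{*}})$.'' The injectivity you establish is the uniqueness of representations $\sum_i \mathrm{in}_{\nu_Q}(c_i)T^i$ with $c_i\in K[x]_n$; it says nothing about expressions whose coefficients are \emph{products} such as $\binom{j}{k}\mathrm{in}_{\nu_Q}(b_j)U^{j-k}=\mathrm{in}_{\nu_Q}\bigl(\binom{j}{k}b_jg^{j-k}\bigr)$. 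For $j-k\geq 1$ the polynomial $b_jg^{j-k}$ in general has degree $\geq n$, so before you may compare coefficients you must re-expand it in powers of $Q$, and that re-expansion injects new powers of $T$: the term with $j=l^{*}$ and $k<l^{*}$ can contribute to the coefficient of $T^{l^{*}}$ itself, through the top term of the $Q$-expansion of $b_{l^{*}}g^{l^{*}-k}$, and a value count only gives that this contribution has value $\geq\nu(b_{l^{*}})$ --- i.e.\ it may be of exactly the critical value and genuinely alter the leading coefficient. So neither ``degree $=l^{*}$'' nor ``leading coefficient $=\mathrm{in}(b_{l^{*}})$'' follows from what you have proved. (When $n=1$ no re-expansion is needed and your argument closes; that is Kaplansky's case.)

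The paper closes exactly this gap quantitatively: it is here that the special choice of $Q$ in \eqref{eqaescolhaimpor} enters, via Lemma \ref{keypolmelhorado} and the resulting estimate \eqref{eqabespivmeseugost}, which forces the positive-index coefficients of the re-expansion of each product $a_ih^j$ to have value strictly larger than $\nu(a_l)$ by the margin $d(B-\nu(Q))$, while the index-zero terms are controlled by Lemma \ref{sugestnart} together with the definition of $l=\delta_Q(f)$. Your proposal never uses \eqref{eqaescolhaimpor}, nor $\epsilon(Q)<\epsilon(Q')$ beyond $\nu(Q)<\nu(Q')$; that is a warning sign, since the lemma is asserted for the particular $Q$ fixed in \eqref{eqaescolhaimpor}--\eqref{eqaescolhaimpor2} and the paper's argument genuinely needs that choice. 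To repair your proof you would have to show that all re-expansion contributions to the coefficients of $T^k$ for $k\leq l^{*}$ have value strictly above the minimum, which is precisely the computation carried out in the paper.
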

\begin{proof}
Let $h:=Q-Q'$ so that $Q=Q'+h$. Then
\[
a_iQ^i=\sum_{j=0}^i {i\choose j}a_ih^jQ'^{i-j}.
\]
For each $i$, $0\leq i\leq r$, and $j$, $0\leq j\leq i$, let
\begin{equation}\label{equansmuitobunitinha}
 {i\choose j}a_ih^j=a_{ij0}+a_{ij1}Q'+\ldots+a_{ijn}Q'^n
\end{equation}
be the $Q'$ expansion of $a_ih^j$. Then
\[
b_l=\sum_{i-j+k=l}a_{ijk}.
\]
For $i$, $0\leq i\leq r$, and $j$, $0\leq j\leq i$, if $k:=l+j-i>0$, then by \eqref{eqabespivmeseugost}, we have
\begin{displaymath}
\begin{array}{rcl}
\nu(a_{ijk})+k\nu(Q')&>&\nu(a_ih^j)+d(B-\nu(Q))\\[8pt]
&\geq& \nu(a_iQ^i)+(j-i)\nu(Q)+k(B-\nu(Q))\\[8pt]
&\geq& \nu(a_lQ^l)+(j-i)\nu(Q)+k(B-\nu(Q))\\[8pt]
&\geq& \nu(a_l)+(l+j-i)\nu(Q)+k(B-\nu(Q))=\nu(a_l)+kB.		
\end{array}
\end{displaymath}
Since $B>\nu(Q')$ we have $\nu(a_{ijk})>\nu(a_l)$.

Suppose now that $k:=l+j-i=0$ (i.e., that $i=l+j$). If $j=0$, then by definition $a_{ijk}=a_l$. If $j>0$, then $i>l$. Since $l=\delta_Q(f)$ we have $\nu(a_iQ^i)>\nu(a_lQ^l)$. Then by Lemma \ref{sugestnart}, applied to \eqref{equansmuitobunitinha}, we have
\[
\nu_Q(a_{ijk}Q'^k)\geq \nu_Q(a_ih^j)=\nu(a_iQ^i)+(j-i)\nu(Q)>\nu(a_l)+(l+j-i)\nu(Q).
\]
Since $\nu_Q(Q')=\nu(Q)$ we obtain that $\nu(a_{ijk})>\nu(a_l)$ and the result follows.
\end{proof}

For each $\rho<\lambda$ and $f\in K[x]$, let
\[
f=a_{\rho 0}(f)+a_{\rho 1}(f)Q_\rho+\ldots+a_{\rho r}(f)Q_\rho^r
\]
be the $Q_\rho$-expansion of $f$. The value of $a_{\rho 0}(f)$ will be very important in what follows.
\begin{Prop}\label{spivdevemostrarpamim}
For $f\in K[x]$ with $\deg_X(f)\leq d$, write $f=l(Q)$ for $l(X)\in K[x]_n[X]$. For every $\rho<\lambda$ we have
\begin{equation}\label{eqnatudofucn}
\nu_\rho(l(h_\rho))\geq\nu_\rho(f).
\end{equation}
Moreover, the equality holds in \eqref{eqnatudofucn} if and only if
\[
\nu\left(a_{\rho 0}(f)\right)=\nu_\rho(f)=\nu(l(h_\rho)).
\]
\end{Prop}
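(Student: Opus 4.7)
The plan is to compute the $Q_\rho$-expansion of $l(h_\rho)=l_0+\sum_{i\ge1}l_ih_\rho^i$ by expanding each monomial $l_ih_\rho^i$ separately, and to show that every coefficient corresponding to $s\ge 1$ has $\nu$-value strictly larger than $\nu_\rho(f)$. The inequality \eqref{eqnatudofucn} will then follow from the trivial bound on the constant coefficient.

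For each $i\ge1$ I would first verify that $\epsilon(l_ih_\rho^i)\le\max(\epsilon(l_i),\epsilon(h_\rho))<\epsilon(Q_0)$, using the submultiplicativity $\epsilon(fg)\le\max(\epsilon(f),\epsilon(g))$ (a direct consequence of the Leibniz rule for Hasse derivatives) together with the key polynomial property of $Q_0$ applied to $l_i,h_\rho\in K[x]_n$. This puts $l_ih_\rho^i$ in the hypothesis of \eqref{eqabespivmeseugost} with $Q'=Q_\rho$. Writing its $Q_\rho$-expansion as $l_ih_\rho^i=\sum_{s\ge0}e_{i,s}Q_\rho^s$, and recalling that $\nu(h_\rho)=\nu(Q)$ (from $h_\rho=Q-Q_\rho$ and $\nu(Q)<\nu(Q_\rho)$ by the ultrametric inequality), the bound \eqref{eqabespivmeseugost} gives
\[
\nu(e_{i,s}Q_\rho^s)>\nu(l_iQ^i)+d(B-\nu(Q))\qquad\text{for every }s\ge1.
\]
Since $l_0\in K[x]_n$ contributes only to $s=0$, summing over $i$ and applying the ultrametric inequality produces $\nu(a_{\rho s}(l(h_\rho))Q_\rho^s)>\nu_Q(f)+d(B-\nu(Q))$ for every $s\ge1$.

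To bound $\nu_\rho(f)$ from above I would apply Lemma \ref{simplemausimporta} to $f$ with $Q'=Q_\rho$, obtaining $\nu(a_{\rho\delta_Q(f)}(f))=\nu(l_{\delta_Q(f)})$ and therefore
\[
\nu_\rho(f)\le\nu(l_{\delta_Q(f)})+\delta_Q(f)\gamma_\rho=\nu_Q(f)+\delta_Q(f)(\gamma_\rho-\nu(Q))<\nu_Q(f)+d(B-\nu(Q)),
\]
using $\delta_Q(f)\le d$ and $\gamma_\rho<B$. Combined with the previous step, this gives $\nu(a_{\rho s}(l(h_\rho))Q_\rho^s)>\nu_\rho(f)$ for every $s\ge1$. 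For the constant term, the congruence $Q\equiv h_\rho\pmod{Q_\rho}$ implies $f\equiv l(h_\rho)\pmod{Q_\rho}$, so $a_{\rho 0}(l(h_\rho))=a_{\rho 0}(f)$, whence $\nu(a_{\rho 0}(l(h_\rho)))\ge\nu_\rho(f)$ trivially. Taking the minimum over $s$ establishes \eqref{eqnatudofucn}.

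For the \emph{moreover} statement, the strict inequalities at every $s\ge1$ mean that equality in \eqref{eqnatudofucn} is possible only when $\nu_\rho(l(h_\rho))$ is achieved at $s=0$, giving $\nu(a_{\rho 0}(f))=\nu_\rho(f)$; then the ultrametric inequality applied to $l(h_\rho)=a_{\rho 0}(f)+\sum_{s\ge1}a_{\rho s}(l(h_\rho))Q_\rho^s$, whose tail has strictly larger $\nu$, automatically forces $\nu(l(h_\rho))=\nu(a_{\rho 0}(f))$. Conversely, the assumption $\nu(a_{\rho 0}(f))=\nu_\rho(f)$ combined with $a_{\rho 0}(l(h_\rho))=a_{\rho 0}(f)$ and the main inequality yields equality in \eqref{eqnatudofucn}. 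The main technical step I expect to need care is the $\epsilon$-bound for the products $l_ih_\rho^i$, which is where the hypotheses on $Q$ and on the $Q_\rho$'s (in particular $\epsilon(Q)<\epsilon(Q_\rho)$) enter the argument through \eqref{eqabespivmeseugost}.
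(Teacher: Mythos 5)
Your proposal is correct and follows essentially the same route as the paper's proof: expand $l(h_\rho)$ via the $Q$-expansion coefficients of $f$, take $Q_\rho$-expansions of each $l_ih_\rho^i$, control the coefficients with $s\geq 1$ by \eqref{eqabespivmeseugost}, bound $\nu_\rho(f)$ through Lemma \ref{simplemausimporta}, and identify the constant terms of $f$ and $l(h_\rho)$ modulo $Q_\rho$. The only (harmless) differences are that you make the $\epsilon$-bound on $l_ih_\rho^i$ explicit and avoid the paper's case split on whether $\delta_Q(f)=0$.
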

\begin{proof}
By definition
\[
f=l(Q)=l(Q_\rho+h_\rho)=Q_\rho p(x)+l(h_\rho)\mbox{ for some }p(x)\in K[x].
\]
Hence $a_{\rho 0}(f)=a_{\rho 0}(l(h_\rho))$. Let
\begin{equation}\label{eqaimport1}
l(h_\rho)=a_{\rho 0}(f)+b_1Q_\rho+\ldots+b_lQ_\rho^l 
\end{equation}
be the $Q_\rho$-expansion of $l(h_\rho)$. We will show that $\nu(b_iQ_\rho^i)>\nu_\rho(f)$ for every $i$, $1\leq i\leq l$, and this will imply our result.

Let $f=a_0+a_1Q+\ldots+a_rQ^r$ be the $Q$-expansion of $f$, so that
\begin{equation}\label{eqaimport2}
l(h_\rho)=a_0+a_1h_\rho+\ldots+a_rh_\rho^r.
\end{equation}
For each $j$, $1\leq j\leq r$, consider the $Q_\rho$-expansion
\begin{equation}\label{eqaimport3}
a_jh_\rho^j=a_{\rho 0j}+a_{\rho 1j}Q_\rho+\ldots+a_{\rho lj}Q_\rho^l
\end{equation}
of $a_jh_\rho^j$. Comparing \eqref{eqaimport1}, \eqref{eqaimport2} and \eqref{eqaimport3}, it is enough to show that
\[
\nu\left(a_{\rho i j}Q_\rho^i\right)>\nu\left(a_{\rho0}(f)\right)\mbox{ for every }i,j, 1\leq i\leq l\mbox{ and } 1\leq j\leq r.
\]
For a fixed $j$, $1\leq j\leq r$, by \eqref{eqabespivmeseugost} applied to \eqref{eqaimport3} we have
\begin{equation}\label{primisriaapeimpo}
\nu\left(a_{\rho ij}Q_\rho^j\right)>\nu\left(a_jQ^j\right)+s(B-\nu(Q)).
\end{equation}
Since $\nu(Q)=\nu(h_\rho)=\nu_\rho(Q)$, if $\nu(a_0)<\nu(a_iQ^i)$ for every $i$, $1\leq i\leq r$, then
\[
\nu_\rho(f)=\nu(a_0)=\nu(l(h_\rho))
\]
and we are done. Suppose not and take $l=\delta_Q(f)>0$. By \eqref{primisriaapeimpo} and the fact that $\nu(a_{\rho l}(f))=\nu(a_l)$ (Lemma \ref{simplemausimporta}), we have
\begin{displaymath}
\begin{array}{rcl}
\nu\left(a_{\rho ij}Q_\rho^j\right)&>& \nu(a_jQ^j)+l(B-\nu(Q))\geq\nu(a_l)+l \nu(Q)+l(B-\nu(Q))\\[8pt]
&=&\nu(a_l)+l B>\nu(a_l)+ l\nu(Q_\rho)\geq\nu_{\rho}(f).
\end{array}
\end{displaymath}
This completes the proof.
\end{proof}

\begin{Cor}\label{Corolarcurucial}
If $\deg(f)< \deg(F)$, then there exists $\rho$ such that
\[
\nu(l(h_\sigma))=\nu(f)=\nu_\sigma(f)=\nu(a_{\sigma 0}(f))
\]
for every $\sigma$, $\rho< \sigma<\lambda$.
\end{Cor}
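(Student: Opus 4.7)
The plan combines the $\Psi_n$-stability of $f$ with Proposition \ref{spivdevemostrarpamim}. Since $F$ is a limit key polynomial of smallest degree in $S_n$ and $\deg(f)<\deg(F)$, the polynomial $f$ lies outside $S_n$, so there is some $Q^{*}\in\Psi_n$ with $\nu_{Q^{*}}(f)=\nu(f)$. From Lemma \ref{sugestnart} one extracts the monotonicity $\nu_{Q'}\leq\nu_{Q''}$ whenever $Q',Q''\in\Psi_n$ and $\nu(Q')\leq\nu(Q'')$. Combined with the cofinality of $\{Q_\rho\}_{\rho<\lambda}$, this provides $\rho_0<\lambda$ such that $\nu(Q_\sigma)\geq\nu(Q^{*})$, and hence $\nu_\sigma(f)=\nu(f)$, for all $\sigma>\rho_0$; this already secures the middle equality of the statement.

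Because $\deg(f)<\deg(F)=dn$ forces $\deg_X(f)<d$, Proposition \ref{spivdevemostrarpamim} applies to $f$ and gives $\nu_\sigma(l(h_\sigma))\geq\nu_\sigma(f)=\nu(f)$ for every $\sigma>\rho_0$. By the ``if and only if'' clause of that proposition, the equality in this inequality is equivalent to the joint chain $\nu(a_{\sigma 0}(f))=\nu_\sigma(f)=\nu(l(h_\sigma))$, which is precisely what the corollary asserts. Thus the task reduces to producing a threshold $\rho\geq\rho_0$ beyond which $\nu(a_{\sigma 0}(f))=\nu(f)$.

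For this remaining equality I apply Lemma \ref{keypolmelhorado} to the Euclidean division $f=q_\sigma Q_\sigma+a_{\sigma 0}(f)$, using $Q_\sigma$ as the key polynomial. The inequality $\epsilon(a_{\sigma 0}(f))<\epsilon(Q_\sigma)$ is immediate from the key polynomial property, since $\deg(a_{\sigma 0}(f))<n=\deg(Q_\sigma)$. The other hypothesis $\epsilon(f)<\epsilon(Q_\sigma)$ is the delicate point; it is to be enforced by the careful choice of $Q$ via \eqref{eqaescolhaimpor}, \eqref{eqaescolhaimpor2} combined with the defining property $\epsilon(Q)<\epsilon(Q_\sigma)$ of our sequence, which together guarantee that $\epsilon(Q_\sigma)$ surpasses $\epsilon(f)$ for $\sigma$ past a sufficiently large ordinal. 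Once both $\epsilon$-inequalities hold, Lemma \ref{keypolmelhorado} yields $\nu(f)=\nu(a_{\sigma 0}(f))$, and the previous paragraph then closes the chain $\nu(l(h_\sigma))=\nu(f)=\nu_\sigma(f)=\nu(a_{\sigma 0}(f))$.

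The principal obstacle I anticipate is the verification that $\epsilon(f)<\epsilon(Q_\sigma)$ eventually: this is not automatic from $\deg(f)<\deg(F)$ alone and must be wrung out of the careful choice of $Q$, the inequalities \eqref{eqaescolhaimpor}--\eqref{eqaescolhaimpor2}, and the cofinal structure of the sequence $\{Q_\rho\}_{\rho<\lambda}$; everything else in the argument is formal manipulation of Proposition \ref{spivdevemostrarpamim} and Lemma \ref{sugestnart}.
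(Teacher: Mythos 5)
Your overall architecture is sound and its first half matches the paper: $\deg(f)<\deg(F)$ together with the minimality of $\deg(F)$ in $S_n$ gives that $f$ is $\Psi_n$-stable, and monotonicity of truncations plus cofinality then yield $\nu_\sigma(f)=\nu(f)$ for all large $\sigma$; the remaining content is indeed the single equality $\nu(a_{\sigma 0}(f))=\nu_\sigma(f)$, after which Proposition \ref{spivdevemostrarpamim} closes the chain. (One small logical slip: the stated ``if and only if'' of that proposition lets you pass from the equality $\nu_\sigma(l(h_\sigma))=\nu_\sigma(f)$ to the full chain, not from the single equality $\nu(a_{\sigma 0}(f))=\nu_\sigma(f)$ to the chain; to see that the single equality suffices you must look inside the proof of the proposition, where $\nu_\rho(l(h_\rho))$ is computed as the minimum of $\nu(a_{\rho 0}(f))$ and terms of value strictly greater than $\nu_\rho(f)$. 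This is minor and repairable.)

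The genuine gap is in your third paragraph. The hypothesis $\epsilon(f)<\epsilon(Q_\sigma)$ required by Lemma \ref{keypolmelhorado} is exactly the hard content of the corollary, and the mechanism you propose for it cannot deliver it: the inequalities \eqref{eqaescolhaimpor} and \eqref{eqaescolhaimpor2} constrain only $Q$, $Q_0$, $F$, $B$ and $\overline B$ --- they are fixed once and for all before $f$ enters the picture, and they say nothing about $\epsilon(f)$ for an arbitrary $f$ of degree $<\deg(F)$. Note that such an $f$ may well have degree $\geq n$, so the key-polynomial property of $Q_\sigma$, which does bound $\epsilon(a_{\sigma 0}(f))$, gives no bound on $\epsilon(f)$. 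What is actually needed at this point is the external fact the paper invokes: a $\Psi_n$-stable polynomial admits some $Q'\in\Psi_n$ with $0\in S_{Q'}(f)$ (equivalently, in the standard theory, $\epsilon(f)<\epsilon(Q')$ for some $Q'\in\Psi_n$). Given that single $Q'$, Lemma \ref{simplemausimporta} applied with $\delta_{Q'}(f)=0$ transports $\nu(a_{\sigma 0}(f))=\nu_{Q'}(f)=\nu(f)$ to every $Q_\sigma$ with $\epsilon(Q_\sigma)>\epsilon(Q')$, and cofinality finishes the argument. So your route is salvageable, but only by importing the same ``well-known'' result the paper cites; it does not follow from the choice of $Q$ and the cofinal structure alone, as you yourself suspected.
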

\begin{proof}
It is well-known that if $f$ is $\Psi_n$-stable, then there exists $Q'\in \Psi_n$ such that $0\in S_{Q'}(f)$. The result follows immediately.
\end{proof}
\section{The Taylor expansion of a polynomial}
We will consider the ring $K(x)[X]$ where $X$ is an indeterminate and let $\partial_i$ denote the $i$-th  Hasse derivative with respect to $X$. Then, for every $l(X)\in K(x)[X]$ and $a,b\in K[x]$ we have the Taylor expansion
\[
l(b)=l(a)+\sum_{i=1}^{\deg_Xl}\partial_il(a)(b-a)^i.
\]

\begin{Lema}[Lemma 4 of \cite{Kap}]\label{lemasuperutil}
Let $\Gamma$ be an ordered abelian group, $\beta_1,\ldots,\beta_n\in \Gamma$ and $\{\gamma_\rho\}_{\rho<\lambda}$ an increasing sequence in $\Gamma$, without a last element. If $t_1,\ldots,t_n$ are distinct positive integers, then there exist $b$, $1\leq b\leq n$, and $\rho<\lambda$, such that
\[
\beta_i+t_i\gamma_\sigma>\beta_b+t_b\gamma_\sigma\mbox{ for every }i, 1\leq i\leq n, i\neq b\mbox{ and }\sigma>\rho.
\]
\end{Lema}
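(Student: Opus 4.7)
The plan is to establish Lemma \ref{lemasuperutil} by first handling the pairwise case $n=2$, and then observing that the general case reduces to picking the minimum of a strict total order on $\{1,\ldots,n\}$.

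For $n=2$, assume without loss of generality that $t_1<t_2$, and set $t=t_2-t_1$, which is a positive integer. The strict inequality $\beta_1+t_1\gamma_\sigma<\beta_2+t_2\gamma_\sigma$ is equivalent to $t\gamma_\sigma>\beta_1-\beta_2$. Since $\{\gamma_\sigma\}_{\sigma<\lambda}$ is strictly increasing and $t$ is a positive integer, the sequence $\{t\gamma_\sigma\}_{\sigma<\lambda}$ is itself strictly increasing. I would then split on two cases: either there exists some $\sigma_0<\lambda$ with $t\gamma_{\sigma_0}>\beta_1-\beta_2$, in which case strict monotonicity gives the same inequality for every $\sigma>\sigma_0$ and $b=1$ works; or $t\gamma_\sigma\le\beta_1-\beta_2$ for every $\sigma<\lambda$, in which case the equation $t\gamma_\sigma=\beta_1-\beta_2$ has at most one solution (again by strict monotonicity), so $t\gamma_\sigma<\beta_1-\beta_2$ holds from some $\sigma_0$ onwards and $b=2$ works.

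For general $n$, writing $\phi_j(\sigma):=\beta_j+t_j\gamma_\sigma$, I would define a binary relation $i\prec j$ on $\{1,\ldots,n\}$ by ``$\phi_i(\sigma)<\phi_j(\sigma)$ for all sufficiently large $\sigma<\lambda$''. The $n=2$ case shows that exactly one of $i\prec j$ or $j\prec i$ holds for every $i\ne j$. Transitivity is immediate, because the conjunction of two eventual-inequality statements is again eventual (take the maximum of the two thresholds, which remains below $\lambda$ since $\lambda$ is a limit ordinal). Hence $\prec$ is a strict total order on the finite set $\{1,\ldots,n\}$, so it has a minimum $b$. Taking $\rho$ to be the maximum of the finitely many pairwise thresholds witnessing $b\prec i$ for $i\in\{1,\ldots,n\}\setminus\{b\}$ yields the required $\rho<\lambda$.

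The only nontrivial step is the dichotomy in the $n=2$ case: the passage from ``$t\gamma_\sigma\le\beta_1-\beta_2$ for every $\sigma$'' to ``$t\gamma_\sigma<\beta_1-\beta_2$ for every sufficiently large $\sigma$'' uses precisely the hypotheses that $t$ is a positive integer and that $\{\gamma_\sigma\}$ is strictly increasing without a last element, so that $t\gamma_\sigma$ cannot stay equal to $\beta_1-\beta_2$ cofinally. Everything else is bookkeeping with finitely many threshold indices, which is harmless because $\lambda$ is a limit ordinal.
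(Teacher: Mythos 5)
Your proof is correct. The paper does not actually prove this lemma---it is imported verbatim as Lemma 4 of \cite{Kap}---and your argument (settle each pair by the eventual monotone behaviour of $\sigma\mapsto(t_j-t_i)\gamma_\sigma$ for a positive integer difference of exponents, then take the minimum of the resulting transitive ``eventually smaller'' total order on the finite index set) is essentially Kaplansky's original one, with the only cosmetic remark being that in your Case B the equality $t\gamma_{\sigma_1}=\beta_1-\beta_2$ cannot in fact occur, since any $\sigma>\sigma_1$ would already violate the Case B hypothesis.
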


\begin{Prop}\label{Corsobrepowerofp}
Take $f\in K[x]$ such that $\deg_X(f)\leq d$ and write $f=l(Q)$ for $l(X)\in K[x]_n[X]$. Then there exists $\rho<\lambda$ such that
\[
\nu_\sigma(f)=\nu(l(h_\sigma))\mbox{ for every }\sigma,\rho<\sigma<\lambda.
\]
In particular, $f$ is $\Psi_n$-stable if and only if it is $(\Psi_n,Q)$-fixed.
\end{Prop}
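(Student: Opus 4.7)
\textbf{Plan}. My starting point is Proposition \ref{spivdevemostrarpamim}: for every $\sigma$ we have $\nu_\sigma(l(h_\sigma)) \geq \nu_\sigma(f)$, with equality precisely when $\nu(a_{\sigma 0}(f)) = \nu_\sigma(f) = \nu(l(h_\sigma))$. Since $\nu(l(h_\sigma)) \geq \nu_\sigma(l(h_\sigma)) \geq \nu_\sigma(f)$ in every case, the target equality $\nu_\sigma(f) = \nu(l(h_\sigma))$ turns out to be equivalent to the single condition $0 \in S_{Q_\sigma}(f)$: the forward direction collapses the three inequalities, and the reverse direction uses the fact (from the proof of Proposition \ref{spivdevemostrarpamim}) that the non-constant $Q_\sigma$-coefficients $b_i$ of $l(h_\sigma)$ satisfy $\nu(b_i Q_\sigma^i) > \nu_\sigma(f)$, so that $b_0 = a_{\sigma 0}(f)$ becomes the unique strict minimum of the $Q_\sigma$-expansion of $l(h_\sigma)$. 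So the whole task reduces to showing $0 \in S_{Q_\sigma}(f)$ for every $\sigma$ past some $\rho$.

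If $\deg(f) < \deg(F)$, then $f$ is $\Psi_n$-stable (as $F$ has minimal degree in $S_n$) and this is exactly the content of Corollary \ref{Corolarcurucial}. The essential case is thus $\deg(f) \geq \deg(F)$, which combined with $\deg_X(f) \leq d$ forces $\deg_X(f) = d$. Here I would use the Taylor expansion
\[
f = l(h_\sigma) + \sum_{i=1}^{d} \partial_i l(h_\sigma)\, Q_\sigma^i.
\]
For each $i \geq 1$, $\partial_i l(Q) \in K[x]$ has $X$-degree at most $d-1$, hence its degree is strictly less than $\deg(F)$; applying Corollary \ref{Corolarcurucial} to the stable polynomial $\partial_i l(Q)$ gives $\nu(\partial_i l(h_\sigma)) = \nu(\partial_i l(Q)) =: \alpha_i$ for all $\sigma$ beyond a suitable threshold. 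Kaplansky's Lemma \ref{lemasuperutil}, applied to these finitely many $\alpha_i$ together with $\gamma_\sigma = \nu(Q_\sigma)$, then extracts a single index $b \in \{1,\ldots,d\}$ past which $\alpha_b + b\gamma_\sigma$ is the strict minimum of $\{\alpha_i + i\gamma_\sigma : 1 \leq i \leq d\}$.

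The main obstacle is to show that $\nu(l(h_\sigma))$ is strictly dominated by $\alpha_b + b\gamma_\sigma$ for every $\sigma$ sufficiently large, so that the $i = 0$ term of the Taylor expansion becomes the unique minimum. This is exactly the sort of bound that the choice of $Q$ in \eqref{eqaescolhaimpor}--\eqref{eqaescolhaimpor2}, through its consequences \eqref{eqabespivmeseugost}--\eqref{eqabespivmeseugost2}, was engineered to deliver. Once that strict inequality is in hand, the Taylor decomposition forces $\nu(f) = \nu(l(h_\sigma))$, and then Proposition \ref{spivdevemostrarpamim}'s equality condition yields $0 \in S_{Q_\sigma}(f)$, finishing the proof of the main equality.

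Finally, the ``in particular'' statement is a direct corollary of the main equality. The polynomial $f$ is $\Psi_n$-stable if and only if $\nu_\sigma(f) = \nu(f)$ for all sufficiently large $\sigma$, and by the main equality this is the same as $\nu(f) = \nu(l(h_\sigma))$ for some $Q_\sigma \in \Psi_n$ with $\nu(Q_\sigma) > \nu(Q)$, which (taking $Q' = Q_\sigma$) is precisely the definition of $(\Psi_n, Q)$-fixedness.
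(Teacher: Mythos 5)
Your setup coincides with the paper's: the reduction of the target equality to $0\in S_{Q_\sigma}(f)$ via Proposition \ref{spivdevemostrarpamim}, the application of Corollary \ref{Corolarcurucial} to the derivatives $\partial_i l(Q)$, and the use of Lemma \ref{lemasuperutil} to isolate an index $b$ for which $\alpha_b+b\gamma_\sigma$ is the strict minimum among the tail terms of the Taylor expansion. The gap is in the step you call the ``main obstacle'': the inequality $\nu(l(h_\sigma))<\alpha_b+b\gamma_\sigma$, which would make $l(h_\sigma)$ the unique dominant term and yield $\nu(f)=\nu(l(h_\sigma))$, is \emph{false} whenever $f$ is not $\Psi_n$-stable --- and that is exactly the case the proposition must cover. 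Test it on $f=F$ itself, which has $\deg_X(f)=d$ and so is in scope: since $F\in S_n$ we have $\nu_\sigma(F)<\nu(F)$ for every $\sigma$, whereas your inequality would give $\nu(F)=\nu(L(h_\sigma))$, and combined with the (true) conclusion $\nu(L(h_\sigma))=\nu_\sigma(F)$ this forces $\nu(F)=\nu_\sigma(F)$, a contradiction. More generally, your argument as outlined would show that every $f$ with $\deg_X(f)=d$ satisfies $\nu(f)=\nu(a_{\sigma 0}(f))=\nu_\sigma(f)$, i.e.\ is $\Psi_n$-stable, which is too much. The paper in fact establishes the \emph{reverse} inequality $\nu(l(h_\sigma))\geq\nu_\sigma(l(h_\sigma))\geq\beta_b+b\gamma_\sigma$ (its \eqref{maisumaesqufoido}); in the unstable case one ends with $\nu(l(h_\sigma))=\beta_b+b\gamma_\sigma=\nu(f-l(h_\sigma))$ and genuine cancellation between $l(h_\sigma)$ and the tail, so that $\nu(f)>\nu(l(h_\sigma))=\nu_\sigma(f)$.

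What is missing is therefore the case analysis the paper runs after \eqref{maisumaesqufoido}. When $\nu_\sigma(f)=\nu(f)$, one assumes $\nu(l(h_\sigma))>\nu(f)$ for contradiction, deduces $\nu(f)=\beta_b+b\gamma_\sigma$ from \eqref{espanquuspa2}, and then compares with a larger index $\sigma'$, where $\nu(l(h_{\sigma'}))\geq\beta_b+b\gamma_{\sigma'}>\nu(f)$ contradicts \eqref{espanquuspa2} at $\sigma'$. When $\nu_\sigma(f)<\nu(f)$, one first shows $\nu_\sigma(l(h_\sigma))=\nu_\sigma(f)$ (otherwise $\nu_\sigma(f)=\beta_b+b\gamma_\sigma$ while $\nu(f-l(h_\sigma))>\beta_b+b\gamma_\sigma$, against \eqref{espanquuspa2}) and only then invokes the equality criterion of Proposition \ref{spivdevemostrarpamim}. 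Note, finally, that even where your inequality does hold, your closing inference is a non sequitur: knowing $\nu(f)=\nu(l(h_\sigma))$ does not activate that equality criterion, which is phrased in terms of $\nu_\sigma(f)$, not $\nu(f)$.
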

\begin{proof}
Since for every $j$, $1\leq j\leq \deg_X(l)$, we have $\deg(\partial_jl(Q))<\deg(F)$ we can use Corollary \ref{Corolarcurucial} to obtain $\rho<\lambda$ such that
\begin{equation}\label{fixandodderi}
\beta_j:=\nu_\sigma\left(\partial_jl(h_\sigma)\right)=\nu\left(\partial_jl(h_\sigma)\right)\mbox{ for every }j,1\leq j\leq \deg_X(l)\mbox{ and }\rho<\sigma.
\end{equation}
By Lemma \ref{lemasuperutil}, there exist $b$, $1\leq b\leq \deg_X(l)$, and $\rho<\lambda$ such that for every $\lambda$, $\rho<\sigma<\lambda$ and $i\neq b$ we have
\begin{equation}\label{eanfgepalsiemba2}
\beta_b+b\gamma_\rho<\beta_i+i\gamma_\rho\mbox{ and }\eqref{fixandodderi}\mbox{ is satisfied}.
\end{equation}
For $\sigma$, $\rho<\sigma<\lambda$, since
\[
f-l(h_\sigma)=\sum_{i=1}^{\deg_X(f)}\partial_il(h_\sigma)Q_\sigma^i
\]
we have
\begin{equation}\label{espanquuspa1}
\nu_\sigma\left(f-l(h_\sigma)\right)=\beta_b+b\gamma_\sigma
\end{equation}
and
\begin{equation}\label{espanquuspa2}
\nu\left(f-l(h_\sigma)\right)=\beta_b+b\gamma_\sigma.
\end{equation}
By Proposition \ref{spivdevemostrarpamim} we have $\nu_\sigma(l(h_\sigma))\geq \nu_\sigma(f)$. This and \eqref{espanquuspa1} imply that
\begin{equation}\label{maisumaesqufoido}
\nu(l(h_\sigma))\geq \nu_\sigma(l(h_\sigma))\geq \beta_b+b\gamma_\sigma.
\end{equation}

If $\nu_\sigma(f)=\nu(f)$, then $\nu(l(h_\sigma))\geq \nu(f)$. Suppose, aiming for a contradiction, that $\nu(l(h_\sigma))> \nu(f)$. Then by \eqref{espanquuspa2} we have
\[
\nu(f)=\beta_b+b\gamma_\sigma.
\]
For any $\sigma'>\sigma$, by \eqref{maisumaesqufoido} we would obtain
\[
\nu(l(h_{\sigma'}))\geq \beta_b+b\gamma_{\sigma'}>\beta_b+b\gamma_{\sigma}=\nu(f)
\]
and this contradicts \eqref{espanquuspa2} (with $\sigma$ replaced by $\sigma'$). Hence, $\nu(l(h_\sigma))=\nu(f)$. 

Suppose now that $\nu_\sigma(f)<\nu(f)$. If $\nu_\sigma(l(h_\sigma))> \nu_\sigma(f)$, then by \eqref{espanquuspa1} we have $\nu_\sigma(f)=\beta_b+b\gamma_\sigma$. Consequently,
\[
\nu(f-l(h_\sigma))\geq\min\{\nu(f),\nu(l(h_\sigma))\}>\nu_\sigma(f)=\beta_b+b\gamma_\sigma,
\]
what is a contradiction to \eqref{espanquuspa2}. Hence,
\[
\nu_\sigma(l(h_\sigma))=\nu_\sigma(f).
\]
By the second part of Proposition \ref{spivdevemostrarpamim} we obtain that $\nu_\sigma(f)=\nu(l(h_\sigma))$. This, \eqref{espanquuspa1}--\eqref{maisumaesqufoido} and the fact that $\nu_\sigma(f)<\nu(f)$ imply that
\[
\nu_\sigma(f)=\nu(l(h_\sigma))=\beta_b+b\gamma_\sigma.
\]
\end{proof}
\begin{Obs}\label{obsequsajudabasta}
In the proof of Theorem \ref{Tehoremquasneunpas} we will use the explicit calculation of $\nu(l(h_\sigma))$ obtained in the previous proposition.
\end{Obs}

\section{Proof of Theorem \ref{Tehoremquasneunpas}}
We will adapt the proof by Kaplansky in \cite{Kap}. For each $i$, $1\leq i\leq d$, the polynomial $\partial_iL(Q)$ has degree smaller than $\deg(F)$, hence by Corollary \ref{Corolarcurucial} there exists $\rho_0<\lambda$ such that
\begin{equation}\label{equanajudamuito}
\beta_i:=\nu(\partial_iL(Q))=\nu(\partial_iL(h_{\rho}))
\end{equation}
for every $\rho$, $\rho_0<\rho<\lambda$.
\begin{Lema}\label{Lemmasobrepowerofp}
If $i=p^t$ and $j=p^tr$ with $r>1$ and $p\nmid r$, then there exists $\rho<\lambda$ such that
\[
\beta_i+i\gamma_\sigma< \beta_j+j\gamma_\sigma\mbox{ for every }\sigma, \rho<\sigma<\lambda.
\]
Moreover, if $C$ in the value group of $\nu$ is such that $C>\gamma_\rho$ for every $\rho<\lambda$, then 
\[
\beta_i+iC<\beta_j+jC.
\]
\end{Lema}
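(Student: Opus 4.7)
The plan is to expand $\partial_iL(Q)$ via the Taylor formula in $X$ centered at $h_\sigma$ and to compare the $\nu$-values of the resulting terms. The hypothesis $p\nmid r$ enters only at the very end, through the observation that $\binom{j}{i}$ is a unit in the residue field. Using $\partial_k\partial_iL=\binom{k+i}{i}\partial_{k+i}L$ and setting $m=k+i$, one obtains
\[
\partial_iL(Q)=\partial_iL(h_\sigma+Q_\sigma)=\sum_{m=i}^{d}\binom{m}{i}\partial_mL(h_\sigma)Q_\sigma^{m-i}.
\]
For $\sigma>\rho_0$, \eqref{equanajudamuito} gives $\nu(\partial_mL(h_\sigma))=\beta_m$, so the $m$-th summand has $\nu$-value $\nu\bigl(\binom{m}{i}\bigr)+\beta_m+(m-i)\gamma_\sigma$; the $m=i$ term equals $\partial_iL(h_\sigma)$ with value exactly $\beta_i$. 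The goal is to show that every other summand has strictly larger $\nu$-value than $\beta_i$ once $\sigma$ is large enough; specializing the resulting inequality to $m=j$ will then deliver the lemma.

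To this end, I would apply Lemma \ref{lemasuperutil} to the family of pairs $\bigl(\nu(\binom{m}{i})+\beta_m,\,m-i\bigr)$ indexed by $i<m\leq d$ with $\partial_mL\neq 0$: the exponents $m-i$ are distinct positive integers, so there exist $b>i$ and $\rho_2<\lambda$ such that for $\sigma>\rho_2$ the $b$-th term strictly minimizes these $\nu$-values. Denote this minimum by
\[
V(\sigma):=\nu\bigl(\tbinom{b}{i}\bigr)+\beta_b+(b-i)\gamma_\sigma.
\]
Strict minimality forces $\nu\bigl(\sum_{m>i}\binom{m}{i}\partial_mL(h_\sigma)Q_\sigma^{m-i}\bigr)=V(\sigma)$, and combining with $\nu(\partial_iL(h_\sigma))=\beta_i$ via the ultrametric inequality, the identity $\beta_i=\nu(\partial_iL(Q))$ forces $V(\sigma)\geq\beta_i$: otherwise $\nu(\partial_iL(Q))$ would equal $V(\sigma)<\beta_i$, a contradiction. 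Because $V$ is strictly increasing in $\sigma$, the equality $V(\sigma)=\beta_i$ can hold for at most one $\sigma$, so there exists $\rho\geq\rho_2$ such that $V(\sigma)>\beta_i$ for every $\sigma>\rho$; by the strict minimization this yields $\nu\bigl(\binom{m}{i}\bigr)+\beta_m+(m-i)\gamma_\sigma>\beta_i$ for every $m>i$.

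To finish, I would verify that $\nu\bigl(\binom{j}{i}\bigr)=0$: writing $r=r_0+r_1p+\cdots$ with $r_0\not\equiv 0\pmod p$, Lucas' theorem applied to $\binom{p^tr}{p^t}$ gives $\binom{j}{i}\equiv r_0\not\equiv 0\pmod p$, and since $\mathrm{char}(K\nu)=p$ this binomial is a unit of the valuation ring. Specializing the inequality of the previous paragraph to $m=j$ gives $\beta_j+(j-i)\gamma_\sigma>\beta_i$, equivalent to $\beta_j+j\gamma_\sigma>\beta_i+i\gamma_\sigma$. For the ``moreover'' clause, given any $C>\gamma_\rho$ for every $\rho<\lambda$, pick any $\sigma>\rho$; then $C>\gamma_\sigma$ and
\[
(\beta_j+jC)-(\beta_i+iC)=(\beta_j-\beta_i)+(j-i)C>-(j-i)\gamma_\sigma+(j-i)C=(j-i)(C-\gamma_\sigma)>0.
\]
I expect the main subtlety to be the passage from the weak inequality $V(\sigma)\geq\beta_i$, which the ultrametric inequality gives directly, to the strict inequality $V(\sigma)>\beta_i$; this exploits the strict monotonicity of $V$ in $\sigma$ together with the strict minimization provided by Lemma \ref{lemasuperutil}.
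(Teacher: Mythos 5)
Your proof is correct and follows essentially the same route as the paper: a Taylor expansion of $\partial_iL$, Lemma \ref{lemasuperutil} to isolate a unique dominant term, the ultrametric comparison with $\beta_i$, and the observation that $p\nmid\binom{j}{i}$ forces $\nu\bigl(\binom{j}{i}\bigr)=0$. The only cosmetic difference is that you expand $\partial_iL(Q)$ around $h_\sigma$ (increment $Q_\sigma$, of value $\gamma_\sigma$), whereas the paper expands $\partial_iL(h_\sigma)$ around $h_\rho$ (increment $h_\sigma-h_\rho$, of value $\gamma_\rho$), which lets it deduce the strict inequality directly from $\gamma_\rho<\gamma_\sigma$ rather than via your monotonicity argument for $V(\sigma)$.
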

\begin{proof}
From the Taylor formula (applied to $\partial_iL)$ we have
\begin{displaymath}
\begin{array}{rcl}
\partial_iL(h_\sigma)-\partial_iL(h_\rho)&=& \displaystyle\sum_{k=1}^{n-i}\partial_k\partial_iL(h_\rho)(h_{\sigma}-h_{\rho})^k\\[10pt]
&=& \displaystyle\sum_{k=1}^{n-i}{i+k\choose i}\partial_{i+k}L(h_\rho)(h_{\sigma}-h_{\rho})^k.\\[8pt]
\end{array}
\end{displaymath}
By Lemma \ref{lemasuperutil}, for $\rho<\sigma$ large enough
\begin{equation}
\nu\left(\partial_iL(h_\sigma)-\partial_iL(h_\rho)\right)=\min_{1\leq k\leq n-i}\left\{\nu\left({i+k\choose i}\partial_{i+k}L(h_\rho)(h_\sigma-h_\rho)^k\right)\right\}.
\end{equation}
In particular, taking $k=j-i$, this gives
\begin{equation}\label{eqcombinattaylo}
\nu\left(\partial_iL(h_\sigma)-\partial_iL(h_\rho)\right)\leq \nu\left({j\choose i}\partial_{j}L(h_\rho)(h_\sigma-h_\rho)^{j-i}\right)
\end{equation}
By \eqref{equanajudamuito} and \eqref{eqcombinattaylo} we have
\begin{displaymath}
\begin{array}{rcl}
\beta_i &\leq&\nu(\partial_iL(h_\sigma)-\partial_iL(h_\rho))\\[8pt]
&\leq&\displaystyle\nu\left({j\choose i}\partial_jL(h_\rho)(h_\sigma-h_\rho)^{j-i}\right)\\[8pt]
&= &\displaystyle\nu\left({j\choose i}\right)+\beta_j+(j-i)\gamma_\rho.
\end{array}
\end{displaymath}
Since $\displaystyle p\nmid {j\choose i}$ and $\CA(K\nu)=p$ we have $\displaystyle\nu\left({j\choose i}\right)=0$. Consequently,
\[
\beta_i\leq \beta_j+(j-i)\gamma_\rho.
\]
This means that for every $\sigma$, $\rho<\sigma<\lambda$, we have
\[
\beta_i+i\gamma_\sigma<\beta_j+j\gamma_\sigma.
\]
Take $C>\gamma_\rho$ for every $\rho<\lambda$. If $\beta_i+iC\geq\beta_j+jC$, then
\[
\beta_i-\beta_j\geq (j-i)C>(j-i)\gamma_\sigma\mbox{ for every }\rho<\lambda
\]
and this contradicts the first part.
\end{proof}

The proof of the next result is very similar to the proof of Proposition \ref{spivdevemostrarpamim}.
\begin{Lema}\label{Lemaparacabcomtuo}
Fix $\theta<\lambda$ and for each $i$, $0\leq i\leq r$, set $a_{i0}:=a_{\theta 0}(\partial_iL(h_\theta))$. Then
\[
\nu_\theta(\partial_iL(h_\theta)-a_{i0})+i\nu(Q_\theta)>\overline B.
\]
\end{Lema}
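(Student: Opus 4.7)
My strategy is to imitate, essentially verbatim, the bookkeeping carried out in Proposition \ref{spivdevemostrarpamim}, now applied to $\partial_iL(h_\theta)$ in place of $l(h_\rho)$. Write $L(X)=\sum_{j=0}^{d}c_j X^j$ with $c_j\in K[x]_n$, so that $F=\sum_j c_jQ^j$ is the $Q$-expansion of $F$, and in particular $\nu(c_jQ^j)\geq \nu_Q(F)$ for every $j$. The Leibniz formula for Hasse derivatives gives
\[
\partial_iL(h_\theta)=\sum_{j\geq i}\binom{j}{i}c_j h_\theta^{j-i},
\]
and the first step is to take the $Q_\theta$-expansion of each summand,
\[
\binom{j}{i}c_j h_\theta^{j-i}=a'_{j0}+a'_{j1}Q_\theta+a'_{j2}Q_\theta^2+\cdots,\qquad a'_{jk}\in K[x]_n.
\]
Summing over $j$ one reads off $a_{i0}=\sum_j a'_{j0}$, so the proof reduces to showing
\[
\nu(a'_{jk}Q_\theta^k)+i\nu(Q_\theta)>\overline B
\]
for every $j\geq i$ and every $k\geq 1$.

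The second step is to produce this bound by applying \eqref{eqabespivmeseugost2} to $f=\binom{j}{i}c_j h_\theta^{j-i}$ and $Q'=Q_\theta$. The hypothesis $\epsilon(f)<\epsilon(Q_0)$ follows from the inequality $\epsilon(gh)\leq\max\{\epsilon(g),\epsilon(h)\}$ together with the fact that $\epsilon$ of any polynomial of degree $<n=\deg(Q_0)$ is strictly less than $\epsilon(Q_0)$ (both $c_j$ and $h_\theta=Q-Q_\theta$ lie in $K[x]_n$); this is exactly the justification used implicitly in the proof of Proposition \ref{spivdevemostrarpamim}. The conclusion of \eqref{eqabespivmeseugost2} reads
\[
\nu(a'_{jk}Q_\theta^k)>\nu\bigl(c_j h_\theta^{j-i}\bigr)+\overline B-\nu_Q(F).
\]

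The last step is to combine this with $\nu(h_\theta)=\nu(Q)$ (which holds because $\nu(Q_\theta)>\nu(Q)$, itself a consequence of $\epsilon(Q_\theta)>\epsilon(Q)$ for $Q,Q_\theta\in\Psi_n$) and $\nu(Q_\theta)\geq\nu(Q)$, yielding
\[
\nu(c_jh_\theta^{j-i})+i\nu(Q_\theta)=\nu(c_j)+(j-i)\nu(Q)+i\nu(Q_\theta)\geq\nu(c_jQ^j)\geq\nu_Q(F).
\]
Adding $i\nu(Q_\theta)$ to both sides of the displayed strict inequality then gives exactly $\nu(a'_{jk}Q_\theta^k)+i\nu(Q_\theta)>\overline B$, as required. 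The only mildly delicate ingredient is the $\epsilon$-bound that unlocks \eqref{eqabespivmeseugost2}; the rest is an arithmetic rearrangement of values, wholly analogous to Proposition \ref{spivdevemostrarpamim}.
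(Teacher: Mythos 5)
Your proposal is correct and follows essentially the same route as the paper's own proof: expand $\partial_iL(h_\theta)$ in powers of $h_\theta$ via the Hasse--Leibniz formula, take the $Q_\theta$-expansion of each summand, bound the positive-index coefficients using \eqref{eqabespivmeseugost2}, and close with $\nu(h_\theta)=\nu(Q)$ and $\nu(c_jQ^j)\geq\nu_Q(F)$. The only difference is cosmetic (you index by the original exponent $j$ and make the $\epsilon$-hypothesis check explicit, which the paper leaves implicit).
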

\begin{proof}
Since $F=a_0+a_1Q+\ldots+a_rQ^r$ we have
\begin{displaymath}
\begin{array}{rcl}
\partial_0L(h_\theta)&=&a_0+a_1h_\theta+\ldots+a_rh_\theta^r\\[8pt]
\partial_1L(h_\theta)&=&a_1+\ldots+ra_rh_\theta^{r-1}\\
&\vdots&\\
\partial_rL(h_\theta)&=&a_r.
\end{array}
\end{displaymath}
If we write
\[
\partial_iL(h_\theta)=b_{i0}+b_{i1}h_\theta+\ldots+b_{is}h_\theta^s,
\]
then
\[
\nu\left(b_{ij}h_\theta^j\right)+i\nu(Q)\geq \nu(a_{i+j}Q^{i+j})\geq \nu_Q(F).
\]
For each $j>1$, write
\[
b_{ij}h_\theta^j=a_{i0j}+a_{i1j}Q_\theta+\ldots+a_{isj}Q_\theta^s.
\]
For every $i,j$ and $k>0$, by \eqref{eqabespivmeseugost2} we have
\begin{displaymath}
\begin{array}{rcl}
\nu\left(a_{ikj}Q^k_\theta\right)+i\nu(Q_\theta)&>&\nu(b_{ij}h_\theta^j)+\overline B-\nu_Q(F)+i\nu(Q_\theta)\\[8pt]
&>&\nu_Q(F)+\overline B-\nu_Q(F)= \overline B.
\end{array}
\end{displaymath}
For every $i$, we have
\[
\partial_iL(h_\theta)-a_{i0}=\sum_{k,j>0}a_{ikj}Q_\theta^j.
\]
The result follows.
\end{proof}
We proceed now with the proof of Theorem \ref{Tehoremquasneunpas}.
\begin{proof}[Proof of Theorem \ref{Tehoremquasneunpas}]
For each $i=p^s$ with $1\leq i\leq \deg(F)$ and $j=p^sr$, with $p\nmid r$, by Lemma \ref{Lemmasobrepowerofp} we have
\begin{equation}\label{eqsobreelemenselim}
\beta_i+iB<\beta_j+jB\mbox{ for }\rho\mbox{ large enough}.
\end{equation}
Then there exists $\rho_{ij}$ such that for every $\sigma>\rho_{ij}$ we have
\begin{equation}\label{equajudaprafrente}
\beta_i+i\gamma_\rho<\beta_j+j\gamma_\sigma\mbox{ for every }\rho<\lambda.
\end{equation}
Take $\sigma$ such that $\eqref{equajudaprafrente}$ is satisfied for every $i=p^s$ and $j=p^sr$, with $p\nmid r$. Write
\[
I=\{l\mid 1\leq l\leq d\mbox{ such that }l=p^i\mbox{ for some }i\in\N\}
\]
and
\[
J:=\{1,\ldots,d\}\setminus I.
\]
Then, for every $j\in J$ there exists $i\in I$ such that
\[
\beta_i+i\gamma_\rho<\beta_j+j\gamma_\sigma\mbox{ for every }\rho<\lambda.
\]
This means that for every $\rho>\sigma$ we have
\begin{equation}\label{eqquennaoentsbanada}
\nu\left(\sum_{j\in J}\partial_jL(h_\sigma)(h_\rho-h_\sigma)^j\right)\geq\min_{j\in J}\{\beta_j+j\gamma_\sigma\}>\min_{i\in I}\{\beta_i+i\gamma_\rho\}=\beta_b+i\gamma_b.
\end{equation}

In order to prove \textbf{(i)}, take $\theta>\sigma$ and consider the polynomial
\[
F_p(x)=L(h_\theta)+\sum_{i\in I}\partial_{i}L(h_\theta)(Q-h_\theta)^{i}=:L_p(Q).
\]
Then
\[
L(h_\rho)-L_p(h_\rho)=\sum_{j\in J}\partial_{j}L(h_\theta)(h_\rho-h_\theta)^{j}.
\]
Consequently,
\[
\nu\left(L(h_\rho)-L_p(h_\rho)\right)>\beta_b+b\gamma_\rho=\nu(L(h_\rho)).
\]
The last equality follows from the proof of Proposition \ref{Corsobrepowerofp} (as observed in Remark \ref{obsequsajudabasta}). Hence,
\[
\nu(L_p(h_\rho))=\beta_b+b\gamma_\rho.
\]
If $r\notin I$, then $\deg_X(F_p)<d$ so we apply Lemma \ref{Corsobrepowerofp} to obtain that $F_p\in S_n$. This is a contradiction to the minimality of the degree of $F$ in $S_n$. Hence, $\deg_X(F_p)= d$ and $F_p$ is monic. Since $\deg_X(F_p)\leq d$, by Lemma \ref{Corsobrepowerofp} we obtain that $F_p\in S_n$ and so is a limit key polynomial for $\Psi_n$.

In order to prove \textbf{(ii)}, for each $i\geq 0$ take
\[
a_i=a_{\theta 0}\left(\partial_iL(h_\theta)\right)
\]
and
\[
\overline F_p:=\sum_{i\in I\cup\{ 0\}}a_iQ_\theta^i.
\]
By Lemma \ref{Lemaparacabcomtuo} we have
\[
\nu_\rho\left(F_p-\overline{F}_p\right)> \overline B>\nu_{\rho}\left(F\right)\mbox{ for every }\rho, \theta<\rho<\lambda.
\]
As before, we conclude that $\overline{F}_p$ is a limit key polynomial for $\Psi_n$ and this completes the proof.
\end{proof}
\begin{Obs}
One can prove (Proposition 3.5 of \cite{michael}) that $a_r=1$ and in particular $\deg(F)=n\deg_X(F)$. 
\end{Obs}


\begin{thebibliography}{99}

\bibitem{Matheus} M. dos S. Barnab\'e and J. Novacoski, \textit{Generating sequences and key polynomials}, to appear in Michigan Mathematical Journal, arXiv:2007.12293, 2020.

\bibitem{mabjulispiv} J. Decaup, W. Mahboud and M. Spivakovsky, \textit{Abstract key polynomials and comparison theorems with the key polynomials of Mac Lane-Vaquié}, Illinois Journal of Mathematics \textbf{62} (2018), 253--270.

\bibitem{MaSpiv} F.J. Herrera Govantes, W. Mahboub, M.A. Olalla Acosta and M. Spivakovsky, \textit{Key polynomials for simple extensions of valued fields}, arXiv:1406.0657, 2014.

\bibitem{Kap} I. Kaplansky, \textit{Maximal fields with valuations I}, Duke Math. Journ. \textbf{9}
(1942), 303 -- 321.

\bibitem{michael} M. Moraes and J. Novacoski, \textit{Limit key polynomials as $p$-polynomials}, J. Algebra \textbf{579}, 152--173 (2021).

\bibitem{nart} E. Nart, \textit{Key polynomials over valued fields}, Publ. Mat. 64 (2020), 195--232.

\end{thebibliography}
\end{document}